\numberwithin{equation}{section}
\newtheorem{thm}{Theorem}[section]
\newtheorem{lm}[thm]{Lemma}
\newtheorem{pr}[thm]{Proposition}
\theoremstyle{definition}
\theoremstyle{definition}
\newcommand{\Rn}{\mathbb{R}^{n}}
\newcommand{\R}{\mathbb{R}}
\newcommand {\grtrsim} {\ {\raise-.5ex\hbox{$\buildrel>\over\sim$}}\ }
\newcommand{\e}{\varepsilon}
\newcommand{\khii}{\text{\lower -.4ex\hbox{$\chi$}}}
\DeclareMathOperator{\spt}{spt}
\newcommand{\f}{\varphi}
\renewcommand{\a}{\alpha}
\begin{document}
\title {Hausdorff dimension and projections related to intersections}
\author{Pertti Mattila}

 \subjclass[2000]{Primary 28A75} \keywords{Hausdorff dimension, projection, intersection, Fourier transform}

\begin{abstract} 
For $S_g(x,y)=x-g(y), x,y\in\Rn, g\in O(n),$ we investigate the Lebesgue measure and Hausdorff dimension of $S_g(A)$ given the dimension of $A$, both for general Borel subsets of $\R^{2n}$ and for product sets.
\end{abstract}

\maketitle

\section{introduction}

Let $A$ and $B$ be Borel subsets of $\Rn$. Under which conditions on the Hausdorff dimensions $\dim A$ and $\dim B$ do we have  
$A\cap (g(B)+z)\neq\emptyset$ for positively many, in the sense of Lebesgue measure $\mathcal L^n$, $z\in\Rn$ for almost all $g\in O(n)$? Defining  $S_{g}(x,y)=x-g(y)$, $A\cap (g(B)+z)\neq\emptyset$ for positively many $z\in\Rn$ is equivalent to  $\mathcal{L}^n(S_g(A\times B))>0$. Or we can also ask when $A\cap (g(B)+z)\neq\emptyset$ for $z$ in and $g$ outside a set of Hausdorff dimension of certain size. This reduces to estimating the dimension of $S_g(A\times B)$ and the dimension of the corresponding exceptional set of orthogonal transformations. 

In this paper we study more generally the Lebesgue measure and Hausdorff dimension of $S_g(A)$ for $A\subset\R^{2n}$. In Theorem \ref{thm2} we shall show for a Borel set $A\subset \R^{2n}$ that for almost all $g\in O(n)$,\ $\mathcal{L}^n(S_g(A))>0$, if $\dim A > n+1$, $\dim S_g(A)\geq \dim A -1$, if $n-1\leq\dim A\leq n+1$, and  $\dim S_g(A)\geq \dim A $, if $\dim A\leq n-1$. In all cases we also derive Hausdorff dimension estimates for the sets of exceptional $g\in O(n)$. In Theorems \ref{thm3} and \ref{thm4} we show that these estimates can be improved for product sets. We shall also comment on some relations to Falconer's distance set problem.

Instead of asking $A\cap (g(B)+z)$ to be non-empty, we could ask on the Hausdorff dimension of these intersections. This problem was studied in \cite{K}, \cite{M1}, \cite{M2}, \cite{M3}, \cite{M4}, \cite{M5} and \cite{M7}. I shall make comments on it at the end of the paper.  I expect the following to be true: if $A$ and $B$ are Borel subsets of $\Rn$ with $\dim A + \dim B > n$, then for almost all $g\in O(n)$,\ $\dim A\cap (g(B)+z)\geq \dim A + \dim B - n-\epsilon$ for every $\epsilon>0$ for positively many $z\in\Rn$. This is only known if one of the sets has dimension bigger than $(n+1)/2$. But in \cite{M8} we apply the results and methods of this paper to show that this also is true if $\dim A + (n-1)\dim B/n > n$ and $A$ and $B$ satisfy an extra condition of positive lower density.  

The family $S_g, g\in O(n)$, is a restricted family of orthogonal projections onto $n$-planes in $\R^{2n}$; it is only $n(n-1)/2$ dimensional while the full family of orthogonal projections has dimension $n^2$. Similar questions for other  restricted families of orthogonal projections have been studied by many people, see  \cite{JJLL},  \cite{JJK}, \cite{FO}, \cite{Or}, \cite{O}, \cite{OO}, \cite{KOV}, \cite{OV}. There also are discussions on these in \cite{M5} and \cite{M6}. 

Hausdorff dimension results for projections have their origin in Marstrand's projection theorem \cite{M}: for a Borel set $A\subset\R^2$, for almost all orthogonal projections $p$ onto lines,\ $\mathcal L^1(p(A))>0$, if $\dim A > 1$, and $\dim p(A) = \dim A$, if $\dim A \leq 1$. The study of exceptions was started by Kaufman \cite{Ka} who showed that in the second statement the dimension of the set of the exceptional projections is at most $\dim A$, and continued by Falconer \cite{F1} who showed that in the first statement the set of the exceptions has dimension at most $2-\dim A$. Discussion and further references can be found for example in \cite{M5}.

One could also study similar questions for other dimensions in place of Hausdorff dimension, for instance, Minkowski and packing dimensions. For these the situation is different. Examples of J\"arvenp\"a\"a in \cite{J1} show that there are sets with full dimension $n$ which project to measure zero on all $m$-planes. Hence the exact analogues of the intersection results of this paper are false even when one of the sets is a plane. However, 
J\"arvenp\"a\"a in \cite{J2} and \cite{J3} and Eswarathasan, Iosevich and Taylor in \cite{EIT} proved some related results.

I would like to thank the referees for many useful comments.

\section{Preliminaries}

We denote by $\mathcal L^n$ the Lebesgue measure in the Euclidean $n$-space $\Rn, n\geq 2,$ and by $\sigma^{n-1}$ the surface measure on the unit sphere $S^{n-1}$. The orthogonal group of $\Rn$ is $O(n)$ and its Haar probability measure is $\theta_n$. For $A\subset\Rn$ (or $A\subset O(n)$) we denote by $\mathcal M(A)$ the set of non-zero Radon measures $\mu$ on $\Rn$ with compact support $\spt\mu\subset A$. The Fourier transform of $\mu$ is defined by
$$\widehat{\mu}(x)=\int e^{-2\pi ix\cdot y}\,d\mu y,~ x\in\Rn.$$
We shall also use $\mathcal F$ to denote the Fourier transform.

For $0<s<n$ the $s$-energy of $\mu\in\mathcal M(\Rn)$ is 
\begin{equation}\label{eq10}
I_s(\mu)=\iint|x-y|^{-s}\,d\mu x\,d\mu y=c(n,s)\int|\widehat{\mu}(x)|^2|x|^{s-n}\,dx.
\end{equation} 
The second equality is a consequence of Parseval's formula and the fact that the distributional Fourier transform of the Riesz kernel $k_s, k_s(x)=|x|^{-s}$, is a constant multiple of $k_{n-s}$, see, for example, \cite{M4}, Lemma 12.12, or \cite{M5}, Theorem 3.10. These books contain most of the background material needed in this paper.

Notice that if $\mu$ satisfies the Frostman condition $\mu(B(x,r))\leq r^s$ for all $x\in\Rn, r>0$, then $I_t(\mu)<\infty$ for all $t<s$. We have for any Borel set $A\subset\Rn$ with $\dim A > 0$, cf. Theorem 8.9 in \cite{M4},
\begin{equation}\label{eq3}
\begin{split}
\dim A&=\sup\{s:\exists \mu\in\mathcal M(A)\ \text{such that}\ \mu(B(x,r))\leq r^s\ \text{for}\ x\in\Rn, r>0\}\\
&=\sup\{s:\exists \mu\in\mathcal M(A)\ \text{such that}\ I_s(\mu)<\infty\}.
\end{split}
\end{equation}

We shall denote by $f_{\#}\lambda$ the push-forward of a measure $\lambda$  under a map  $f: f_{\#}\lambda(A)= \lambda(f^{-1}(A))$.

By the notation $M\lesssim N$ we mean that $M\leq CN$ for some constant $C$. The dependence of $C$ should be clear from the context. The notation $M \approx N$ means that $M\lesssim N$ and $N\lesssim M$. By  $c$ we mean positive constants with obvious dependence on the related parameters. The closed ball with centre $x$ and radius $r$ will be denoted by $B(x,r)$. 

\section{Projections of general sets}

For $g\in O(n), t\in\R$, define
$$S_g, \pi_t:\Rn\times\Rn \to \Rn, S_g(x,y)=x-g(y), \pi_t(x,y)=x-ty.$$

Both of these can be realized as families of orthogonal projections. The family $S_g$ has curvature (in any natural sense) while $\pi_t$ does not have. See  \cite{FO}, \cite{Or} and \cite{KOV} for the role of curvature in projection theorems.

More precisely, let $\{e_1,\dots,e_n\}$ be an orthonormal basis for $\R^n$. Set $u_i=\frac{1}{\sqrt{2}}(e_i,-g^{-1}(e_i)), i=1,\dots,n.$ Then $\{u_1,\dots,u_n\}$ is an orthonormal basis for an $n$-plane $V_g\subset\R^{2n}$. The orthogonal complement of $V_g$, spanned by  $\frac{1}{\sqrt{2}}(e_i,g^{-1}(e_i)), i=1,\dots,n,$ is the kernel of $S_g$. Since $\frac{1}{\sqrt{2}}S_g(u_i)=e_i$, $\frac{1}{\sqrt{2}}S_g$ is essentially the orthogonal projection onto $V_g$. 

When $n=2$ we have in complex notation, $g$ identified with the angle $\phi$: $S_g(x,y)=x-e^{i\phi}y.$

Some relations between the projections $\pi_t$ and the Kakeya problem are discussed in \cite{M6}.

Recall the following lemma from \cite{M7}, Lemma 2.1. In \cite{M7} only the first bound was proven, but the second can be proven by an analogous argument as used for the first bound. Notice that the term $(n-1)(n-2)/2$ is needed there: the subgroup of $O(n)$ consisting of $(x,t)\mapsto (g(x),t), x\in\R^{n-1}, t\in\R, g\in O(n-1)$, has dimension $(n-1)(n-2)/2$ and $(g(0),1)=(0,1)$ for all $g\in O(n-1)$. 

\begin{lm}\label{lemma2} Let $\theta\in\mathcal M(O(n)), \a>(n-1)(n-2)/2$ and $\beta=\a-(n-1)(n-2)/2$. If $\theta(B(g,r))\leq r^{\a}$ for all $g\in O(n)$ and $r>0$, then for $x,z\in\Rn\setminus\{0\}, r>0$,
\begin{equation}\label{eq11}
\theta(\{g:|x-g(z)|< r\})\lesssim \min\{(r/|z|)^{\beta},(r/|x|)^{\beta}\}.
\end{equation}
\end{lm}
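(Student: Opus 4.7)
The plan is to exploit the orbit map $\Phi:O(n)\to S^{n-1}$, $\Phi(g)=g(\zeta)$ with $\zeta=z/|z|$. This is a submersion whose fibres are cosets of the stabiliser $\mathrm{Stab}(\zeta)\cong O(n-1)$ of dimension $(n-1)(n-2)/2$, and which is $(n-1)$-dimensional at the base. If we set $E:=\{g:|x-g(z)|<r\}$, then $\Phi(E)$ lies in the spherical cap $B(x/|z|,r/|z|)\cap S^{n-1}$, and hence has diameter $\lesssim r/|z|$. So $E$ is geometrically thin in the $n-1$ horizontal directions but unrestricted in the $(n-1)(n-2)/2$ vertical ones.

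For the first bound I would reduce to $r<|z|$, since otherwise the Frostman hypothesis applied at a scale comparable to the diameter of $O(n)$ already gives $\theta(O(n))\lesssim 1\lesssim(r/|z|)^{\beta}$. Working in a skew-symmetric chart $g=g_0\exp(X)$ at a base point $g_0\in E$, I would split $X=X_h+X_v$, with $X_v$ tangent to $\mathrm{Stab}(\zeta)$ and $X_h$ in its orthogonal complement. Since $X_h\mapsto X_h\zeta$ is a linear isomorphism onto $\zeta^{\perp}$, the constraint $|g(\zeta)-g_0(\zeta)|\lesssim r/|z|$ forces $|X_h|\lesssim r/|z|$, while $X_v$ only has to range over a bounded subset of the $(n-1)(n-2)/2$-dimensional stabiliser algebra. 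Hence $E$ can be covered by $\lesssim\rho^{-(n-1)(n-2)/2}$ balls of radius $\rho=r/|z|$ in $O(n)$, and the Frostman hypothesis gives
$$\theta(E)\lesssim \rho^{-(n-1)(n-2)/2}\cdot\rho^{\alpha}=(r/|z|)^{\beta}.$$
This is the argument of \cite{M7}.

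For the second bound I would use the identity $|x-g(z)|=|g^{-1}(x)-z|$. Pushing $\theta$ forward under the inversion $\iota(g)=g^{-1}$ yields a measure $\tilde\theta$; since $\iota$ is an isometry of $O(n)$ (in any bi-invariant metric), $\tilde\theta(B(g,r))=\theta(B(g^{-1},r))\leq r^{\alpha}$, so $\tilde\theta$ inherits the Frostman bound. But
$$\theta(\{g:|x-g(z)|<r\})=\tilde\theta(\{g:|g(x)-z|<r\}),$$
and the first bound applied to $\tilde\theta$, with the roles of $x$ and $z$ swapped, then yields $\lesssim(r/|x|)^{\beta}$, as required.

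The main technical point is the covering estimate, which rests on the quantitative fact that $\Phi$ behaves like a Riemannian submersion: a ball of radius $\rho$ in $O(n)$ maps to a set of diameter $\approx\rho$ on $S^{n-1}$. Once that is granted, the optimal scale $\rho=r/|z|$ is forced by the cap diameter, and the exponent correction $\alpha\mapsto\alpha-(n-1)(n-2)/2=\beta$ comes precisely from the stabiliser dimension.
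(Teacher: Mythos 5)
Your argument is correct. Note that the paper itself does not prove this lemma: it quotes it from \cite{M7}, where only the first bound is proved, and merely remarks that the second follows ``by an analogous argument''. Your proof of the first bound is essentially the argument of \cite{M7}: the set $\{g:|x-g(z)|<r\}$ projects under the orbit map $g\mapsto g(z/|z|)$ into a cap of radius $\approx r/|z|$, hence lies in an $\approx r/|z|$-neighbourhood of a coset of the stabiliser $O(n-1)$, and can therefore be covered by $\lesssim (r/|z|)^{-(n-1)(n-2)/2}$ balls of radius $r/|z|$, after which the Frostman hypothesis is summed over the cover. Where you genuinely depart from the paper is the second bound: instead of repeating the covering argument with the roles of $x$ and $z$ interchanged, you use $|x-g(z)|=|g^{-1}(x)-z|$ and push $\theta$ forward under inversion, which preserves the Frostman condition because $g\mapsto g^{-1}=g^{T}$ is an isometry for the matrix norm defining the balls $B(g,r)$. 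This is cleaner and less error-prone than the ``analogous argument'', since it derives the second estimate formally from the first. The only technical caveat is in your linearisation step: the implication ``$|g(\zeta)-g_0(\zeta)|\lesssim r/|z|$ forces $|X_h|\lesssim r/|z|$'' holds only for $X$ in a sufficiently small neighbourhood of $0$ (globally one can have $\exp(X)\zeta=\zeta$ with $X_h\neq 0$), so one should work in finitely many small charts, or equivalently invoke the local triviality of the fibre bundle $O(n)\to S^{n-1}$ directly; this is the ``quantitative submersion'' point you flag yourself, and it is a routine fix by compactness.
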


This will be applied via the following proposition, as in Chapter 5 of \cite{M5} and in many other places: 

\begin{pr}\label{prop}
Let $A\subset\Rn$ be a Borel set and $\beta>0, \gamma>0$. Suppose that  for any $\theta\in \mathcal M(O(n))$ such that \eqref{eq11} holds,  $\mathcal L^n(S_g(A))>0$ (or $\dim S_g(A)\geq \gamma$) for $\theta$ almost all $g\in O(n)$. Then there is a Borel set $E\subset O(n)$ such that $\dim E \leq \beta+(n-1)(n-2)/2$ and  $\mathcal L^n(S_g(A))>0$ (or $\dim S_g(A)\geq \gamma$) for $g\in O(n)\setminus E$.
\end{pr}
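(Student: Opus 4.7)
The plan is a standard Frostman-duality argument converting the ``holds for all Frostman $\theta$'' hypothesis into a Hausdorff-dimension bound on the exceptional set. I would take $E\subset O(n)$ to be the set of rotations for which the desired conclusion fails, i.e., $E=\{g\in O(n):\mathcal L^n(S_g(A))=0\}$ in the first case, or $E=\{g\in O(n):\dim S_g(A)<\gamma\}$ in the second. A preliminary step is to verify that $E$ is Borel: since $(g,x,y)\mapsto S_g(x,y)$ is continuous, standard Fubini-type arguments (using a compact exhaustion of $A$) show that $g\mapsto \mathcal L^n(S_g(A))$ and $g\mapsto \dim S_g(A)$ are Borel; alternatively one passes to a Borel envelope of $E$ of the same Hausdorff dimension.

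The main step is a proof by contradiction. Suppose $\dim E>\beta+(n-1)(n-2)/2$ and set $\alpha=\beta+(n-1)(n-2)/2$. By Frostman's lemma there exists a nonzero $\theta\in\mathcal M(E)$ with $\theta(B(g,r))\leq Cr^{\alpha}$ for all $g\in O(n)$ and $r>0$; rescaling $\theta$ I may assume $C=1$. Since $\beta>0$ gives $\alpha>(n-1)(n-2)/2$, Lemma \ref{lemma2} applies to this $\theta$ and yields \eqref{eq11} with the prescribed exponent $\beta=\alpha-(n-1)(n-2)/2$.

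Applying the hypothesis of the proposition to this $\theta$, the desired conclusion ($\mathcal L^n(S_g(A))>0$ or $\dim S_g(A)\geq\gamma$) holds for $\theta$-almost every $g\in O(n)$, so $\theta$-almost every $g$ lies in $O(n)\setminus E$. But $\theta$ is nonzero and concentrated on $E$, so $\theta(O(n)\setminus E)=0$, a contradiction. Therefore $\dim E\leq\beta+(n-1)(n-2)/2$, as claimed.

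Essentially no analytic content remains in the proposition itself: it is a formal repackaging of Lemma \ref{lemma2} via Frostman duality, and the real work lies in the preceding lemma. The only mild technicality is the Borel measurability of $E$, worth recording but routine; everything else is mechanical.
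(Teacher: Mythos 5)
Your argument is correct and is essentially identical to the paper's own proof: both contrapositive/contradiction arguments put a Frostman measure on the exceptional set via \eqref{eq3}, invoke Lemma \ref{lemma2} to get \eqref{eq11}, and then contradict the hypothesis; the paper simply skips the measurability discussion you sketch. No substantive difference.
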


\begin{proof}
I skip the easy measurability arguments. If the Lebesgue measure part fails, the set $G$ of $g\in O(n)$ for which  $\mathcal L^n(S_g(A))=0$ has dimension greater than $\alpha=\beta+(n-1)(n-2)/2$. Then by \eqref{eq3}  there is $\theta\in \mathcal M(G)$ such that $\theta(B(g,r))\leq r^{\a}$ for all $g\in O(n)$ and $r>0$, so that \eqref{eq11} holds by Lemma \ref{lemma2}. By assumption, $\mathcal L^n(S_g(A))>0$  for $\theta$ almost all $g\in O(n)$, which contradicts the definition $G$ and that $\theta\in \mathcal M(G)$. The Hausdorff dimension part is proven by the same argument.
\end{proof}

The following theorem for $\pi_t$ essentially is a special case of Oberlin's results in \cite{O}. It was not explicitly stated there, but (1) and (2) follow by his arguments, see in particular the proof of Lemma 3.1 in \cite{O}. The proof of (3) is a standard argument of Kaufman from \cite{Ka}, see the proof of Theorem \ref{thm2}. The proof of Theorem \ref{thm2} also gives Theorem \ref{thm1} changing $g(x)$ to $tx$. 

\begin{thm}\label{thm1}
Let $A\subset\R^{2n}$  be a Borel set. 
\begin{itemize}
\item[(1)] If $\dim A > 2n-1$, then $\mathcal{L}^n(\pi_t(A))>0$ for $\mathcal L^1$ almost all $t\in\R$. Moreover, there is $E\subset\R$ such that $\dim E \leq 2n-\dim A$ and $\mathcal{L}^n(\pi_t(A))>0$ for  $t\in\R\setminus E$.
\item[(2)] If $n\leq\dim A \leq 2n-1$, then $\dim \pi_t(A) \geq \dim A -n+1$ for $\mathcal L^1$ almost all $t\in\R$. Moreover, for $\dim A - n \leq u\leq\dim A - n + 1$ there is $E\subset \R$ such that $\dim E \leq u+n-\dim A$ and  
$\dim \pi_t(A) \geq u$ for  $t\in\R\setminus E$. 
\item[(3)] If $\dim A \leq n$, then $\dim \pi_t(A) \geq \min\{\dim A,1\}$ for $\mathcal L^1$ almost all $t\in\R$. Moreover, for $0<u\leq\min\{\dim A,1\}$ there is $E\subset \R$ such that $\dim E \leq u$ and    
$\dim \pi_t(A) \geq u$ for  $t\in\R\setminus E$. 
\item[(4)] For all $t\in\R$,\ $\dim \pi_t(A) \geq \dim A -n$.
\end{itemize}
\end{thm}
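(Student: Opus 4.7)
The plan is to prove the four parts by three techniques: a Fourier/Frostman estimate after Oberlin \cite{O} for (1) and (2), Kaufman's trick for (3), and a trivial fiber/product argument for (4). Throughout, set $\nu_t=(\pi_t)_\#\mu$ for $\mu\in\mathcal{M}(A)$, and note the Fourier identity
$$\widehat{\nu_t}(\xi)=\int e^{-2\pi i\xi\cdot(x-ty)}\,d\mu(x,y)=\widehat\mu(\xi,-t\xi),\quad \xi\in\Rn.$$

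For part (4), observe that $\pi_t$ is a linear surjection $\R^{2n}\to\Rn$ with $n$-dimensional kernel $K_t=\{(ty,y):y\in\Rn\}$. Writing $\R^{2n}=K_t^\perp\oplus K_t$ exhibits $\pi_t^{-1}(\pi_t(A))$ as bi-Lipschitz equivalent to $\pi_t(A)\times K_t$, and since $K_t\cong\Rn$ has packing dimension $n$, the product formula gives $\dim\pi_t^{-1}(\pi_t(A))=\dim\pi_t(A)+n$. The inclusion $A\subset\pi_t^{-1}(\pi_t(A))$ then yields $\dim\pi_t(A)\geq\dim A-n$ for every $t\in\R$.

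For parts (1) and (2), given $\mu\in\mathcal{M}(A)$ with $I_s(\mu)<\infty$ for $s$ slightly below $\dim A$, and a Frostman-$\beta$ measure $\theta\in\mathcal{M}(\R)$ concentrated on a putative exceptional set, I would estimate
$$J_u(\theta)=\int_\R\int_{\Rn}|\widehat\mu(\xi,-t\xi)|^2|\xi|^{u-n}\,d\xi\,d\theta(t)$$
(taking $u=n$ for (1) and general $u$ for (2)) from above by a constant times $I_s(\mu)$. Fubini, polar $\xi=r\omega$, and the substitution $\sigma=-tr$ in the $t$-integral express $J_u(\theta)$ as an integral of $|\widehat\mu(r\omega,\sigma\omega)|^2$ against $r^{u-2}$ together with a Fourier factor coming from the Frostman bound on $\theta$; this is a restricted energy of $\mu$ supported on the cone $\{(\xi,\eta)\in\R^{2n}:\eta\parallel\xi\}$. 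Oberlin's estimate in \cite{O} then bounds it by $I_s(\mu)$ whenever $s-u+\beta>n$, so the parameter choices $\beta=2n-\dim A$ for (1) and $\beta=u+n-\dim A$ for (2) yield $J_u(\theta)<\infty$. By the $\R$-parameter analogue of Proposition~\ref{prop}, this gives $\nu_t\in L^2$ (for (1)) or $I_u(\nu_t)<\infty$ (for (2)) off a set of dimension $\leq\beta$.

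For part (3), apply Kaufman's argument. Pick $u\leq\min(\dim A,1)$, $\mu\in\mathcal{M}(A)$ with $I_u(\mu)<\infty$, and $\theta\in\mathcal{M}(\R)$ with $\theta(B(t,r))\leq r^u$. By \eqref{eq10},
$$\int I_u(\nu_t)\,d\theta(t)=c(n,u)\int|\xi|^{u-n}\int|\widehat\mu(\xi,-t\xi)|^2\,d\theta(t)\,d\xi.$$
Expanding $|\widehat\mu|^2$ as a double $\mu$-integral and pulling $\theta$ inside gives an inner factor $\widehat\theta(\xi\cdot(y_1-y_2))$; a polar-coordinate computation in $\xi$ followed by Parseval against the 1D Riesz potential of $\theta$ (which is bounded precisely when $u\leq 1$) yields a kernel $K(w,v)\lesssim(|w|^2+|v|^2)^{-u/2}$, so the whole expression is $\lesssim I_u(\mu)<\infty$. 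Hence $I_u(\nu_t)<\infty$ for $\theta$-a.e.\ $t$, contradicting exceptionality of $\spt\theta$. The principal analytic difficulty is the estimate in (1)--(2): because $\{\pi_t\}$ has no curvature, $\widehat\mu$ is probed only on the $(n{+}1)$-dimensional cone of parallel pairs in $\R^{2n}$, and keeping track of the exponent arithmetic among $u,\beta,s,n$ through the polar/radial substitutions so as to lose no more than one unit at the critical threshold is the most delicate bookkeeping.
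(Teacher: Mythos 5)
Your proposal is correct and follows essentially the same route as the paper, which itself only sketches this theorem: parts (1)--(2) are delegated to Oberlin's arguments (equivalently, to the proof of Theorem~\ref{thm2} with $g(y)$ replaced by $ty$, whose dyadic estimate \eqref{obeq} summed against $|\xi|^{u-n}$ gives exactly your threshold $s-u+\beta>n$), part (3) is Kaufman's argument, and part (4) is the trivial fiber bound the paper records after the statement. The only remark worth making is that your Fourier-side version of (3) produces, once $\widehat{\theta}(-\xi\cdot(y_1-y_2))$ is unwound, precisely the spatial kernel $\int|w-tv|^{-u}\,d\theta t$ that the paper estimates directly via the layer-cake formula --- doing it on the spatial side avoids the non-absolutely-convergent Fubini step and makes visible that the Frostman exponent of $\theta$ must be taken strictly greater than $u$, with the endpoint $u=\min\{\dim A,1\}$ recovered by a limiting argument.
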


Notice that the last statement is trivial, because associating with $\pi_t$ an orthogonal projection $p_t$, as for $S_g$ in the beginning of this section, $A\subset p_t(A)\times p_t^{-1}(0)$ and 
$\dim(p_t(A)\times p_t^{-1}(0))=\dim p_t(A)+n=\dim\pi_t(A)+n$.

This theorem is valid also when $n=1$; it is Marstrand's projection with Kaufman's and Falconer's exceptional set estimates.

We have a similar result for $S_g$. Observe also there that (4) is trivial. The proof below for (1) and (2) is a modification of Oberlin's proof. The proof of (3) again is Kaufman's argument.

\begin{thm}\label{thm2}
Let $A\subset\R^{2n}$  be a Borel set. \\
\begin{itemize}
\item[(1)] If $\dim A > n+1$, then $\mathcal{L}^n(S_g(A))>0$ for $\theta_{n}$ almost all $g\in O(n)$. Moreover, there is $E\subset O(n$) such that $\dim E \leq 2n-\dim A+(n-1)(n-2)/2$ and $\mathcal{L}^n(S_g(A))>0$ for  $g\in O(n)\setminus E$.
\item[(2)] If $n-1\leq \dim A \leq n+1$, then $\dim S_g(A) \geq \dim A -1$ for $\theta_{n}$ almost all $g\in O(n)$. Moreover, for any $\dim A - n \leq u\leq\dim  A-1$ there is $E\subset O(n$) such that $\dim E \leq u+n-\dim A+(n-1)(n-2)/2$ and $\dim S_g(A) \geq u$ for  $g\in O(n)\setminus E$.
\item[(3)] If $\dim A \leq n-1$, then $\dim S_g(A) \geq \dim A$ for $\theta_{n}$ almost all $g\in O(n)$. Moreover, for $0<u\leq\dim A$ there is $E\subset O(n$) such that $\dim E \leq u+(n-1)(n-2)/2$ and $\dim S_g(A) \geq u$ for  $g\in O(n)\setminus E$.
\item[(4)] For all $g\in O(n)$,\ $\dim S_g(A) \geq \dim A -n$
\end{itemize}
\end{thm}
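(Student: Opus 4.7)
My plan is to apply Proposition \ref{prop} and prove each of (1), (2), (3) in the form: for every $\theta\in\mathcal M(O(n))$ satisfying \eqref{eq11} with an appropriate parameter $\beta$, the stated conclusion about $S_g(A)$ holds for $\theta$-a.e.\ $g$. Item (4) is immediate, since $S_g$ is linear with an $n$-dimensional kernel, so $\dim A\leq\dim S_g(A)+n$.

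Take $\mu\in\mathcal M(A)$ with $I_s(\mu)<\infty$ for $s$ slightly below $\dim A$, and set $\nu_g=(S_g)_\#\mu$, so that $\widehat{\nu_g}(\xi)=\widehat\mu(\xi,-g^{-1}\xi)$. For (3) I would use the energy approach (Kaufman's argument). By Fubini,
$$\int_{O(n)}I_u(\nu_g)\,d\theta(g)=\iint\Bigl(\int_{O(n)}|a_1-g(a_2)|^{-u}\,d\theta(g)\Bigr)d\mu(x)\,d\mu(y)$$
with $a=x-y=(a_1,a_2)$; a layer-cake calculation with \eqref{eq11} bounds the inner integral by $C\max\{|a_1|,|a_2|\}^{-u}\lesssim|x-y|^{-u}$ as long as $0<u<\beta$. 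Thus $\int I_u(\nu_g)\,d\theta\lesssim I_u(\mu)<\infty$, and $\dim S_g(A)\geq u$ for $\theta$-a.e.\ $g$; letting $u\nearrow\min\{\beta,\dim A\}$ and invoking Proposition \ref{prop} yields (3).

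For (1) and (2) I would follow Oberlin's Fourier approach, now exploiting the curvature of the family $\{g\in O(n)\}$. The goal is to show finiteness of
$$J=\int_{O(n)}\int_{\mathbb{R}^{n}}|\widehat\mu(\xi,-g^{-1}\xi)|^2\,\phi(\xi)\,d\xi\,d\theta(g),$$
with $\phi\equiv 1$ for (1) (forcing $\widehat{\nu_g}\in L^2$, hence $\nu_g\ll\mathcal L^n$ has positive mass) and $\phi(\xi)=|\xi|^{u-n}$ for (2) (yielding $I_u(\nu_g)<\infty$). In polar coordinates $\xi=r\omega$, pushing forward $g\mapsto -g^{-1}\omega$ turns $\theta$ into measures $\tau_\omega$ on $S^{n-1}$ with $\tau_\omega(B(v,\delta))\lesssim\delta^\beta$ by \eqref{eq11}. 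Applying Parseval on the resulting spherical cross-section gives
$$\int_{S^{n-1}}\!\int_{S^{n-1}}|\widehat\mu(r\omega,r\tilde\omega)|^2\,d\tau_\omega(\tilde\omega)\,d\sigma(\omega)=\iint\widehat{\rho_r}(p,q)\,d(\mu*\check\mu)(p,q),$$
where $\rho_r$ denotes the natural measure on $S^{n-1}(r)\times S^{n-1}(r)\subset\mathbb R^{2n}$ and $\widehat{\rho_r}(p,q)=\int_{O(n)}\widehat\sigma(r(p-gq))\,d\theta(g)$, with $\sigma$ the surface measure on $S^{n-1}$.

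The hard part will be the pointwise bound on $\widehat{\rho_r}$: one must combine the stationary-phase decay $|\widehat\sigma(\xi)|\lesssim(1+|\xi|)^{-(n-1)/2}$ with \eqref{eq11} to obtain enough decay in $r$ and in $\max\{|p|,|q|\}$ (hence in $|p-q|$) that, after integration against $r^{n-1}\phi(r)\,dr$, the resulting kernel is controlled by $C|(p,q)|^{-s_0}$ for some $s_0<\dim A$; the bound then closes through $I_{s_0}(\mu)<\infty$ and the first identity of \eqref{eq10}. It is the $(n-1)/2$ spherical decay of $\widehat\sigma$ that is responsible for the loss of only $1$ dimension (rather than $n-1$) in (2) and for the threshold $n+1$ (rather than $2n-1$) in (1), in contrast to the flat family $\pi_t$ in Theorem \ref{thm1}.
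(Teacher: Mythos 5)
Your handling of (3) and (4) is correct and is exactly the paper's: (4) is the trivial kernel observation, and (3) is Kaufman's energy argument, where $\int I_u((S_g)_\#\mu)\,d\theta\lesssim I_u(\mu)$ follows from the layer-cake computation with \eqref{eq11} provided $u<\beta$, followed by Proposition \ref{prop}. The gap is in (1) and (2): there the entire content of the proof is deferred to ``the hard part'', and the mechanism you propose for that part does not deliver the stated exponents. You want to bound $\widehat{\rho_r}(p,q)=\int\widehat{\sigma}(r(p-gq))\,d\theta g$ by combining the stationary-phase decay $|\widehat{\sigma}(\xi)|\lesssim(1+|\xi|)^{-(n-1)/2}$ with \eqref{eq11}. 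But decomposing dyadically over $\{g: 2^j\leq r|p-gq|<2^{j+1}\}$, the resulting series $\sum_j 2^{-j(n-1)/2}\min\{1,(2^j/(r\max\{|p|,|q|\}))^{\beta}\}$ saturates at the top scale as soon as $\beta>(n-1)/2$, so this combination yields decay exponent only $\min\{\beta,(n-1)/2\}$, not $\beta=n-1$. Since the thresholds in (1) and (2) correspond to $2n-\beta$ with $\beta=n-1$, your route as described would only prove $\mathcal L^n(S_g(A))>0$ for $\dim A>2n-(n-1)/2=(3n+1)/2$, which is strictly weaker than $n+1$ for $n\geq2$. Your closing diagnosis is correspondingly off: the gain over the flat family $\pi_t$ of Theorem \ref{thm1} comes not from the oscillatory decay of $\widehat{\sigma}$ but from the purely measure-theoretic transversality estimate of Lemma \ref{lemma2} (the orbit measure $g\mapsto g(z)$ is $(n-1)$-Frostman), applied to a non-oscillatory kernel.

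Concretely, the paper proves the annulus estimate \eqref{obeq}, namely $\iint_{R\leq|\xi|\leq2R}|\widehat{\mu}(\xi,-g^{-1}(\xi))|^2\,d\xi\,d\theta g\lesssim R^{2n-s-\beta}$, by writing $\widehat{\mu}=\widehat{\phi}\ast\widehat{\mu}$ for a cutoff $\phi\equiv1$ on $\spt\mu$, applying Cauchy--Schwarz, and then using two separate facts: the bound \eqref{theta}, $\int(1+|\xi+g(y_2)|)^{-M}\,d\theta g\lesssim R^{-\beta}$ for $|\xi|\approx R$ and $M>\beta$ large (a direct consequence of \eqref{eq9}, with the large non-oscillatory exponent $M$ making the dyadic sum converge for every $\beta\leq n-1$), together with the local $L^2$ estimate \eqref{eq8}, $\int_{|y|\leq5R}|\widehat{\mu}(y)|^2\,dy\lesssim R^{2n-s}$; summing over $R=2^k$ then gives (1) and, after weighting by $|\xi|^{u-n}$, (2). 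If you wish to keep your polar/Parseval set-up, note that for $\theta=\theta_n$ the quantity $\widehat{\rho_r}(p,q)$ factors as a product of two spherical transforms and the radial integration can be exploited as in Section \ref{cone}; but for a general $\beta$-Frostman $\theta$, which you need for the exceptional-set statements via Proposition \ref{prop}, no such identity is available, and your sketch gives no way to close the argument. You should either carry out the estimate along the paper's lines or supply a genuine proof of the claimed bound on $\widehat{\rho_r}$.
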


\begin{proof}
Let $0<s<\dim A$ and  $\mu\in\mathcal M(A)$ with $I_s(\mu) < \infty$.

Let $\mu_g\in\mathcal M(S_g(A))$ be the push-forward of $\mu$ under $S_g$. Then for $\xi\in\Rn$,
\begin{align*}
&\widehat{\mu_g}(\xi)=\int e^{-2\pi i\xi\cdot S_g(x,y)}\,d\mu (x,y)\\ 
&=\int e^{-2\pi i(\xi,-g^{-1}(\xi))\cdot(x,y)}\,d\mu (x,y)=\widehat{\mu}(\xi,-g^{-1}(\xi)).
\end{align*}
Let $0<\beta\leq n-1$ and let $\theta\in\mathcal M(O(n))$ be such that  for $x,z\in\Rn\setminus\{0\}, r>0,$ 
\begin{equation}\label{eq9}
\theta(\{g\in O(n):|x-g(z)|<r\})\leq \min\{(r/|z|)^{\beta},(r/|x|)^{\beta}\}.
\end{equation}

To prove (1) and (2) we shall show that for $R>1$,
\begin{equation}\label{obeq}
\iint_{R\leq|\xi|\leq 2R}|\widehat{\mu}(\xi,-g^{-1}(\xi))|^2\,d\xi\,d\theta g\lesssim R^{2n-s-\beta}.
\end{equation}

This is applied to the dyadic annuli, $R=2^k, k=1,2,\dots$. The sum converges if $s>2n-\beta$, and we can choose $\mu$ with such $s$ if $\dim A > 2n-\beta$. This gives $\iint|\widehat{\mu_g}(\xi)|^2\,d\xi\,d\theta g<\infty$. Hence for $\theta$ almost all $g\in O(n)$,\ $\mu_g$ is absolutely continuous with $L^2$ density, and so $\mathcal L^n(S_g(A))>0$. Taking $\beta=n-1$ and $\theta=\theta_n$, we get the first part of (1). The second follows with general $\beta$ and $\theta$ using Proposition \ref{prop}. 

To prove part (2) let $0<u<s+\beta-n$ and $\mu$ as above. Then \eqref{obeq} yields
\begin{equation*}
\iint|\widehat{\mu_g}(\xi)|^2|\xi|^{u-n}\,d\xi\,d\theta g<\infty,
\end{equation*}
so by \eqref{eq10} and \eqref{eq3}, $\dim S_g(A)\geq u$ for $\theta$ almost all $g\in O(n)$ and thus (2) follows with the same argument as above.

Now we begin the proof of \eqref{obeq}. From \eqref{eq9} we get for $\xi,y\in\R^n, R\leq |\xi| \leq 2R, M>\beta,$

\begin{equation}\label{theta}
\int(1+|\xi+g(y)|)^{-M}\,d\theta g \lesssim R^{-\beta},
\end{equation}

because 

\begin{align*}
&\int(1+|\xi+g(y)|)^{-M}\,d\theta g\\
&\leq \theta(\{g\in O(n):|\xi+g(y)| < 1\}) + \int_{\{g:|\xi+g(y)|\geq 1\}}(1+|\xi+g(y)|)^{-M}\,d\theta g\\
&\lesssim R^{-\beta} + \sum_{j= 0}^{\infty}2^{-Mj}\theta(\{g\in O(n):2^j\leq |\xi+g(y)| < 2^{j+1}\})\\
&\lesssim R^{-\beta} + \sum_{j= 0}^{\infty}2^{-Mj}(2^{j}/|\xi|)^{\beta}\lesssim R^{-\beta}.
\end{align*}

Choose a smooth compactly supported function $\phi$ which equals 1 on the support of $\mu$. Then $\widehat{\mu}=\widehat{\phi\mu}=\widehat{\phi}\ast\widehat{\mu}$ and the integral in \eqref{obeq} equals

\begin{align*}
I_R&:=\iint_{R\leq|\xi|\leq 2R}|\widehat{\phi\mu}(\xi,-g^{-1}(\xi))|^2\,d\xi\,d\theta g\\
&=\iint_{R\leq|\xi|\leq 2R}\left|\int\widehat{\phi}((\xi,-g^{-1}(\xi))-y)\widehat{\mu}(y)\,dy\right|^2\,d\xi\,d\theta g.
\end{align*}
By the Schwartz inequality,
\begin{align*}
&I_R\leq\iint_{R\leq|\xi|\leq 2R}(\int|\widehat{\phi}((\xi,-g^{-1}(\xi))-y)|\,dy\\ &\int|\widehat{\phi}((\xi,-g^{-1}(\xi))-y)||\widehat{\mu}(y)|^2\,dy)\,d\xi\,d\theta g\\
&\lesssim \iint_{R\leq|\xi|\leq 2R}\int|\widehat{\phi}((\xi,-g^{-1}(\xi))-y)||\widehat{\mu}(y)|^2\,dy\,d\xi\,d\theta g\\
&\lesssim \iint_{R\leq|\xi|\leq 2R}\int(1+|(\xi,-g^{-1}(\xi))-y|)^{-3M}|\widehat{\mu}(y)|^2\,dy\,d\xi\,d\theta g,
\end{align*}
by the fast decay of $\widehat{\phi}$, where  $M>2n$. Clearly, with $y=(y_1,y_2), y_1,y_2\in\Rn$, 
$$|(\xi,-g^{-1}(\xi))-y)|\geq \max\{|\xi-y_1|,|\xi+g(y_2)|\}.$$
Moreover, $|(\xi,-g^{-1}(\xi))-y)|\approx |y|$, when $R\leq|\xi|\leq 2R$ and $|y|>5R$.
Hence
\begin{align*}
&I_R\lesssim\\
& \int_{|y|\leq 5R} \int_{R\leq|\xi|\leq 2R}\int(1+|\xi+g(y_2)|)^{-M}\,d\theta g(1+|\xi-y_1|)^{-M}\,d\xi|\widehat{\mu}(y)|^2\,dy\\
&+\int_{|y| > 5R}\int_{R\leq|\xi|\leq 2R}\int(1+|\xi+g(y_2)|)^{-M}\,d\theta g(1+|\xi-y_1|)^{-M}\,d\xi|y|^{-M}\,dy.
\end{align*}

We have by \eqref{theta}
$$\int(1+|\xi+g(y_2)|)^{-M}\,d\theta g \lesssim R^{-\beta}.$$ 
Since $\int (1+|\xi-y_1|)^{-M}\,d\xi$ is bounded, we obtain
$$I_R\lesssim
R^{-\beta}\left(\int_{|y|\leq 5R}|\widehat{\mu}(y)|^2\,dy+\int_{|y| > 5R} |y|^{-M}\,dy\right).$$
The second integral is bounded and for the first we have by \cite{M5}, Section 3.8,
\begin{equation}\label{eq8}
\int_{|y|\leq 5R}|\widehat{\mu}(y)|^2\,dy \lesssim R^{2n-s},
\end{equation}
which imply $I_R\lesssim R^{2n-s-\beta}$ as required. Thus we have proved \eqref{obeq}, hence also (1) and (2).

To prove (3) suppose that $0<u<\dim A\leq n-1, \mu\in\mathcal M(A)$ with $I_u(\mu) < \infty$ and let $\theta$ and $\beta$ be as in \eqref{eq9} with $\beta > u$. It suffices to show that $\dim S_g(A) \geq u$ for $\theta$ almost all $g\in O(n)$. Using \eqref{eq9} this follows from

\begin{align*}
&\int I_u(\mu_g)\,d\theta g = \iiint |x-y|^{-u}\,dS_{g\#}\mu x\,dS_{g\#}\mu y\,d\theta g\\
&=\iiint |S_g(w-z)|^{-u}\,d\mu w\,d\mu z\,d\theta g\\
&=\iiint_0^{\infty} \theta(\{g:|S_g(w-z)|^{-u}>r\})\,dr\,d\mu w\,d\mu z\\
&=\iiint_0^{\infty} \theta(\{g:|S_g(w-z)|<r^{-1/u}\})\,dr\,d\mu w\,d\mu z\\
&\lesssim\iiint_0^{|w-z|^{-u}} \,dr\,d\mu w\,d\mu z + \iiint_{|w-z|^{-u}}^{\infty} (r^{-1/u}/|w-z|)^{\beta}\,dr\,d\mu w\,d\mu z\\
&\approx  I_{u}(\mu)<\infty,
\end{align*}
which completes the proof of the theorem.
\end{proof}

\subsection{Sharpness}
The bounds in the $\mathcal L^1$ almost all statements of Theorem \ref{thm1} are sharp when $n=2$. To see this let $0\leq s \leq 1, C_s\subset\R$ with $\dim C_s=s$, and $A_s = \{(x,y)\in\R^2\times\R^2:x_1\in C_s, y_1=0\}.$ Then $\dim A_s = 2+s, \pi_t(A_s) =  C_s\times\R$ and $\dim \pi_t(A_s) = 1+s$. This shows that (2) is sharp. For (1) we can choose $C_1$ with $\mathcal L^1(C_1)=0$, then $\dim A_1=3$ and $\mathcal L^2(\pi_t(A))=0$. 
If $1\leq\dim A\leq 2$ we can only say that $\dim\pi_t(A)\geq 1$ for almost all $t\in\R$ since $\pi_t(\R\times\{0\}\times\R\times\{0\})=\R\times\{0\}$. Hence (3) also is sharp. Probably the bounds for the dimensions of the exceptional sets are sharp, too. Perhaps this could be seen using similar examples as in \cite{KM}, see also Example 5.13 in \cite{M5}, but I have not checked it.

When $n\geq 3$ a similar argument shows that the $\mathcal L^1$ almost all statements of Theorem \ref{thm1} are sharp when $\dim A\geq 2n-2$ or $\dim A \leq 2$. Probably it is not sharp in the remaining ranges.

I don't know if the bounds are sharp for $S_g$. In Section \ref{product sets} we shall see that some of them can be improved for product sets. By the above examples this is not possible for $\pi_t$. 

I illustrate the role of $\dim O(n-1)=(n-1)(n-2)/2$ with two simple examples:
For $0<s\leq 1$ choose a compact set $C_s\subset\R$ such that $\dim C_s = \dim(C_s-C_s)=s, \dim (C_s\times C_s)=2s$ and $\mathcal L^1(C_1-C_1)=0$. Such sets are easy to construct. If $A_s=\R^{n-1}\times C_s\times\R^{n-1}\times C_s$, then $\dim A_s = 2s + 2(n-1), S_g(A)=\R^{n-1}\times (C_s-C_s)$ and $\dim S_g(A_s) = s + n-1$  for $g\in O(n-1)$ (identified with $(x,t)\mapsto (g(x),t))$. In particular, $\dim A_1= 2n$ and $\mathcal L^n(S_g(A_1))=0$ for $g\in O(n-1)$. Next, take $B_s=\{0\}\times C_s\times\{0\}\times C_s$. Then 
$\dim B_s = 2s, S_g(A)=\{0\}\times (C_s-C_s)$ and $\dim S_g(B_s) = s$  for $g\in O(n-1)$.

\subsection{An alternative argument}\label{alter}
Here is another simple argument for the statement 'If $A\subset\R^{2n}$ is a Borel set and $\dim A > n+1$, then $\mathcal{L}^n(S_g(A))>0$ for $\theta_{n}$ almost all $g\in O(n)$':

Let   $\mu\in\mathcal M(A)$ with $I_{n+1}(\mu)<\infty$. Consider for $r>0$, 

\begin{align*}
&I_{r}=r^{-n}\iint S_{g\#}\mu(B(z,r))\,dS_{g\#}\mu z\,d\theta_ng\\
&=r^{-n}\iint\theta_n(\{g:|x-u-g(y-v)|\leq r\})\,d\mu (u,v)\,d\mu(x,y)\\
&\lesssim r^{-1}\iint_{\{(x,y):||x-u|-|y-v||\leq r\}}|y-v|^{1-n}\,d\mu(u,v)\,d\mu (x,y).\\
\end{align*}

Let $\phi\in C_0^{\infty}(\Rn)$ with $\phi(y)=1$ when $(x,y)\in spt\mu$ for some $x$, and let
$$\psi_{r}(x,y)=\chi_{\{(x,y):||x|-|y||\leq r\}}(x,y)|y|^{1-n}\phi(y).$$ 
Then

$$I_{r}\lesssim r^{-1}\int\psi_{r}\ast\mu\,d\mu=r^{-1}\int\widehat{\psi_{r}}|\widehat{\mu}|^2.$$
Let $\sigma_r$ be the surface measure on $\{x\in\Rn: |x|=r\}$. Then for any $u,y\in\R^n$, $\widehat{\sigma_{|y|}}(u)=|y|^{n-1}|u|^{1-n}\widehat{\sigma_{|u|}}(y)$. Thus for small $r$,
\begin{align*}
|r^{-1}\widehat{\psi_{r}}(u,v)|&=\left|r^{-1}\iint_{||x|-|y||\leq r}|y|^{1-n}\phi(y)e^{-2\pi i(u\cdot x+v\cdot y)}\,dx\,dy\right|\\
&\approx\left|\int|y|^{1-n}\phi(y)\widehat{\sigma_{|y|}}(u)e^{-2\pi iv\cdot y}\,dy\right|\\
&=\left|\int|y|^{1-n}\phi(y)|y|^{n-1}|u|^{1-n}\widehat{\sigma_{|u|}}(y)e^{-2\pi iv\cdot y}\,dy\right|\\
&=\left||u|^{1-n}\mathcal F(\phi\widehat{\sigma_{|u|}})(v)\right|=\left||u|^{1-n}\int\widehat{\phi}(y-v)\,d\sigma_{|u|}y\right|\\
&\lesssim |u|^{1-n}(1+||u|-|v||)^{1-n}\lesssim|(u,v)|^{1-n},\\
\end{align*}
the second to last by the fast decay of $\widehat{\phi}$. Hence
$$I_{r}\lesssim\iint|(u,v)|^{1-n}|\widehat{\mu}(u,v)|^2\,d(u,v)\approx I_{n+1}(\mu).$$
Define the lower derivative, with $\a(n)=\mathcal L^n(B(0,1))$,
$$\underline{D}(S_{g\#}\mu)(z)=\liminf_{r\to 0}\a(n)^{-1}r^{-n}S_{g\#}\mu(B(z,r)).$$
Letting $r\to 0$ and using Fatou's lemma we then see that 
\begin{equation}\label{eq5}
\iint \underline{D}(S_{g\#})(z)^2\,dz\,d\theta_{n}g=\int \underline{D}(S_{g\#})(z)\,dS_{g\#}\mu z\,d\theta_{n}g<\infty,
\end{equation}
which implies that $S_{g\#}\mu<< \mathcal{L}^n$ with $L^2$ density, see e.g. \cite{M4}, Theorem 2.12, for $\theta_{n}$ almost all $g$, from which the claim follows.\\ \\

\section{Product sets}\label{product sets}

 For product sets we can improve Theorem \ref{thm2} for $S_g$, but not for $\pi_t$, as the previous examples show. Let $\theta\in\mathcal M(S^{n-1})$ and $0<\beta \leq n-1$.  Suppose that for $x,z\in\Rn, r>1$,
\begin{equation}\label{eq12}
\theta(\{g:|x-g(z)|< r\})\lesssim (r/|x|)^{\beta}.
\end{equation}

Let $\mu\in\mathcal M(\R^n)$ and set for $r>1$ and $\xi\in\Rn$,

$$\sigma(\mu)(r)=\int_{S^{n-1}}|\widehat{\mu}(rv)|^2\,d\sigma^{n-1}v,$$ 
$$\sigma_{\theta}(\mu)(\xi)=\int|\widehat{\mu}(g^{-1}(\xi))|^2\,d\theta g.$$ 

Then $\sigma_{\theta_{n}}(\mu)(\xi)=c\sigma(\mu)(|\xi|)$. 

The decay estimates for $\sigma(\mu)(r)$ have been studied by many people, a discussion can be found in \cite{M5}. The  best known estimates, due to Wolff, \cite{W}, when $n=2$, and to Du and Zhang , \cite{DZ}, in the general case, are the following: Let  $\mu\in\mathcal M(\Rn)$ with $\mu(B(x,r))\leq r^s$ for $x\in\R^n, r>0$. Then
for all $\epsilon > 0, r>1$, 

\begin{equation}\label{dz}
\sigma(\mu)(r) \lesssim \begin{cases} r^{-(n-1)s/n+\epsilon}\ \text{for all}\ 0<s<n,\\
r^{-(n-1)/2+\epsilon}\ \text{if}\ 0<(n-1)/2\leq s\leq n/2,\\
 r^{-s+\epsilon}\ \text{if}\ 0<s\leq (n-1)/2.\\
\end{cases}
\end{equation}
The middle estimate is kind of unnecessary to state since it follows from the last. I shall drop the corresponding case later on. The essential case for the first estimate is $s>n/2$, otherwise the second and third estimates are better. Up to $\epsilon$ these estimates are sharp when $n=2$. When $n\geq 3$ the sharp bounds are not known for all $s$, see \cite{D} for discussion and the most recent examples.

For $r>1$, let
$$A_r=\{x\in\R^n:r-1<|x|<r+1\}.$$ 
It is easy to see that for large  $r$,\ $\sigma(\mu)(r)\lesssim r^{-\alpha + \epsilon}$ for all $\epsilon>0$ if and only if $r^{1-n}\int_{A_r}|\widehat{\mu}(x)|^2\,dx \lesssim r^{-\alpha + \epsilon}$ for all $\epsilon>0$. Indeed, the implication from left to right is trivial. The opposite implication  is Proposition 16.2 in \cite{M5}.  This and the above estimates for 
$\sigma(\mu)(r)$ yield the following lemma:

\begin{lm}\label{Lemma1} Let $\theta\in\mathcal M(S^{n-1})$ satisfy \eqref{eq12} with exponent $0<\beta \leq n-1$. If $\mu(B(x,r))\leq r^s$ for $x\in\R^n, r>0$, then for every $\xi\in \R^n$  with $|\xi|>1$ and for every $\epsilon>0$,
\begin{equation*}
\sigma_{\theta}(\mu)(\xi)\lesssim \begin{cases} |\xi|^{-(n-1)s/n+n-1-\beta+\epsilon}\ \text{for all}\ 0<s<n,\\
 |\xi|^{-s+n-1-\beta+\epsilon}\ \text{if}\ 0<s\leq (n-1)/2.\\
\end{cases}
\end{equation*}
\end{lm}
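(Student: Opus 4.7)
My plan is to write $\sigma_\theta(\mu)(\xi)$ as the integral of $|\widehat\mu|^2$ against a Frostman-type measure on a sphere, and then, after a standard Fourier smoothing, reduce to the known annular $L^2$ estimates \eqref{dz} for $\widehat\mu$.

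Fix $\xi\in\Rn$ with $|\xi|=R>1$. Let $\nu:=(g\mapsto g^{-1}\xi)_{\#}\theta$; since $g$ acts as an isometry, $\nu$ is supported on the sphere $S_R=\{y:|y|=R\}$ and
$$\sigma_\theta(\mu)(\xi)=\int|\widehat\mu(y)|^2\,d\nu(y).$$
Moreover, \eqref{eq12} translates directly into the Frostman-type bound
$$\nu(B(y,r))=\theta(\{g:|\xi-g(y)|<r\})\lesssim(r/R)^\beta\quad\text{for all }r>1,$$
while $\nu(B(y,r))\leq\nu(B(y,1))\lesssim R^{-\beta}$ for $r\leq 1$.

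We may assume $\mu$ is supported in a fixed compact set $K$ and pick a Schwartz function $\varphi$ with $\varphi\equiv 1$ on $K$, so that $\widehat\mu=\widehat\varphi\ast\widehat\mu$. Cauchy--Schwarz applied to this convolution, together with Fubini, gives
$$\int|\widehat\mu|^2\,d\nu\;\lesssim\;\int|\widehat\mu(y')|^2 F(y')\,dy',\qquad F(y'):=\int|\widehat\varphi(y-y')|\,d\nu(y).$$
Using $|\widehat\varphi(z)|\lesssim_N(1+|z|)^{-N}$ for every $N$, together with a dyadic decomposition of $\Rn$ around $y'$ and the Frostman bound above, one obtains the pointwise estimate
$$F(y')\;\lesssim_N\;R^{-\beta}\bigl(1+\mathrm{dist}(y',S_R)\bigr)^{-N+\beta}.$$

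Finally I split the $dy'$-integral into the unit annulus $A_R$ and the dyadic shells $\{2^j\leq||y'|-R|<2^{j+1}\}$, $j\geq 1$. Each such shell is a union of $\lesssim 2^j$ unit annuli $A_r$ with $r\leq R+2^{j+1}$, and by \eqref{dz} combined with Proposition~16.2 of \cite{M5} each contributes $\int_{A_r}|\widehat\mu|^2\lesssim r^{n-1-\gamma+\epsilon}$, where $\gamma=(n-1)s/n$ in the first case and $\gamma=s$ in the second. Choosing $N$ large enough makes the sum over $j$ geometric and dominated by the $j=0$ term corresponding to $A_R$, which yields
$$\sigma_\theta(\mu)(\xi)\;\lesssim\;R^{-\beta}\cdot R^{n-1-\gamma+\epsilon}=R^{-\gamma+n-1-\beta+\epsilon},$$
the desired bound in both regimes. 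The main technical point is the bookkeeping in this last step: one must check that the fast decay of $\widehat\varphi$ absorbs both the factor $2^j$ counting unit annuli per shell and the polynomial growth of $r^{n-1-\gamma+\epsilon}$ as $r$ increases, which is routine once $N$ is taken sufficiently large compared to $\beta$ and $n$.
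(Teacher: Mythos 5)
Your proposal is correct and follows essentially the same route as the paper's proof: smooth $\widehat\mu$ by convolving with $\widehat\varphi$, apply Cauchy--Schwarz, use the Frostman-type bound \eqref{eq12} to control the $\theta$-mass (your push-forward $\nu$ is just a repackaging of the $d\theta g$ integral) near the sphere $\{|y|=|\xi|\}$, and reduce to the annular estimates obtained from \eqref{dz} via Proposition~16.2 of \cite{M5}. The only cosmetic difference is that you split at unit/dyadic scales where the paper splits at scale $|\xi|^{\epsilon}$, which changes nothing in the outcome.
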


\begin{proof}
I only consider the first estimate, the second follows in the same way. Using the above estimate for $\sigma(\mu)(r)$ and the above mentioned relation to the estimates over the annuli $A_r$, we have 
\begin{equation}\label{annulus}
r^{1-n}\int_{A_r}|\widehat{\mu}(x)|^2\,dx \lesssim r^{-(n-1)s/n+\epsilon}\end{equation}
for all $\epsilon>0$. The proof of Proposition 16.2 in \cite{M5} works for $\theta$ in place of $\theta_{n}$ as such yielding the required estimate. I give a short sketch. 

Choose a smooth function $\f, \f\geq 0,$ such that $\f = 1$ on $\spt \mu$.  Then arguments similar to those in the proof of Theorem \ref{thm2} show that

\begin{equation*}
\begin{split}
\sigma_{\theta}(\mu)(\xi) 
& \lesssim  \iint_{ \{(x,g) : |g^{-1}(\xi)-x|<|\xi|^\e \}} \,d \theta g |\widehat{\mu} (x)|^2 \,d x\\
&+\sum_{j=1}^{\infty} \iint_{\{(x,g) :  (|\xi|^\e)^j\leq|g^{-1}(\xi)-x|<(|\xi|^\e)^{j+1} \}}|\widehat{\f}(g^{-1}(\xi)-x)||\widehat{\mu} (x)|^2 \,dx\, d\theta g.
\end{split}
\end{equation*}
The second term is easily handled by \eqref{annulus} and the fast decay of $\f$. 

We have by \eqref{eq12}

$$\theta(\{g:|g^{-1}(\xi)-x|<|\xi|^{\e}\}) \lesssim |\xi|^{(\e-1)\beta},$$
whence the first term is bounded by $$|\xi|^{(\e-1)\beta}\int_{\{x:|\xi|-|\xi|^{\e}<|x|<|\xi|+|\xi|^{\e}\}}|\widehat{\mu}(x)|^2\,dx.$$
Covering the interval $(|\xi|-|\xi|^{\e},|\xi|+|\xi|^{\e})$ with about $|\xi|^{\e}$ intervals of length $2$ and applying \eqref{annulus} we obtain the required bound.
\end{proof}

\begin{thm}\label{thm3}
Let $A, B\subset\R^{n}$  be Borel sets.
\begin{itemize}
\item[(1)]Suppose $\dim A+\dim B>n$. If $\dim A + (n-1)\dim B/n > n$ or $\dim A > (n+1)/2$, then $\mathcal{L}^2(S_g(A\times B))>0$ for $\theta_{n}$ almost all $g\in O(n)$.
\item[(2)] If $\dim A + (n-1)\dim B/n\leq n$, then \\
$\dim S_g(A\times B) \geq \dim A + (n-1)\dim B/n$ for $\theta_{n}$ almost all $g\in O(n)$. \\
If $\dim A + \dim B\leq n$ and $\dim B\leq (n-1)/2$, then\\
$\dim S_g(A\times B) \geq \dim A + \dim B$ for $\theta_{n}$ almost all $g\in O(n)$.\\
\end{itemize}
\end{thm}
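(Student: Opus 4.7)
The plan is to mimic the Fourier-analytic proof of Theorem \ref{thm2}, now exploiting the product structure of $A\times B$ through the spherical-average bounds \eqref{dz}. Pick $s_A<\dim A$ and $s_B<\dim B$, and Frostman measures $\mu\in\mathcal M(A)$, $\nu\in\mathcal M(B)$ with $\mu(B(x,r))\le r^{s_A}$ and $\nu(B(x,r))\le r^{s_B}$; set $\lambda_g:=S_{g\#}(\mu\times\nu)$. Exactly as at the start of the proof of Theorem \ref{thm2},
$$\widehat{\lambda_g}(\xi)=\widehat{\mu\times\nu}(\xi,-g^{-1}(\xi))=\widehat{\mu}(\xi)\,\widehat{\nu}(-g^{-1}(\xi)),$$
so $|\widehat{\lambda_g}(\xi)|^2=|\widehat{\mu}(\xi)|^2|\widehat{\nu}(g^{-1}(\xi))|^2$. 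Integrating this in $g$ against $\theta_n$ and using its rotation invariance gives
$$\int|\widehat{\lambda_g}(\xi)|^2\,d\theta_n g=c\,|\widehat{\mu}(\xi)|^2\,\sigma(\nu)(|\xi|),$$
reducing everything to a weighted integral of $|\widehat{\mu}|^2$ against the spherical average of $|\widehat{\nu}|^2$.

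For part (1), a further integration in $\xi$ combined with Fubini shows that $\mathcal L^n(S_g(A\times B))>0$ for $\theta_n$-a.e.\ $g$ as soon as $\int|\widehat{\mu}(\xi)|^2\sigma(\nu)(|\xi|)\,d\xi<\infty$, since this forces $\widehat{\lambda_g}\in L^2$ almost surely and hence $\lambda_g$ to have an $L^2$ density. When $\dim A+(n-1)\dim B/n>n$, I pick $s_A,s_B$ with $s_A+(n-1)s_B/n>n$ and apply the first bound of \eqref{dz}: the integral reduces to a multiple of $I_{n-(n-1)s_B/n+\epsilon}(\mu)$, finite by \eqref{eq10}. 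When instead $\dim A>(n+1)/2$, I pick $s_A>(n+1)/2$ with $s_A+s_B>n$; if $s_B\ge(n-1)/2$ the middle bound of \eqref{dz} gives $\sigma(\nu)(r)\lesssim r^{-(n-1)/2+\epsilon}$, reducing the integral to $I_{(n+1)/2+\epsilon}(\mu)<\infty$, and if $s_B<(n-1)/2$ the third bound gives $\sigma(\nu)(r)\lesssim r^{-s_B+\epsilon}$ so the integral becomes $I_{n-s_B+\epsilon}(\mu)$, finite because $s_A>n-s_B$.

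For part (2), the identical calculation with the extra weight $|\xi|^{u-n}$ coming from the energy formula \eqref{eq10} gives
$$\int I_u(\lambda_g)\,d\theta_n g\;\approx\;\int|\widehat{\mu}(\xi)|^2\,\sigma(\nu)(|\xi|)\,|\xi|^{u-n}\,d\xi.$$
The first bound of \eqref{dz} then controls the right-hand side by $I_{u-(n-1)s_B/n+\epsilon}(\mu)$, finite whenever $u<s_A+(n-1)s_B/n$; the third bound (applicable when $s_B\le(n-1)/2$) gives $I_{u-s_B+\epsilon}(\mu)$, finite whenever $u<s_A+s_B$. Letting $s_A\nearrow\dim A$ and $s_B\nearrow\dim B$ and invoking \eqref{eq3} converts these a.e.\ finite-energy statements into the two claimed lower bounds on $\dim S_g(A\times B)$.

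The argument is essentially mechanical once the above Fourier identity is in place: the real content lies in choosing which case of \eqref{dz} to invoke under each hypothesis. The only minor bookkeeping is that low frequencies cause no trouble (since $\mu,\nu$ have compact supports, $\widehat{\mu},\widehat{\nu}$ are bounded there) and that the Frostman exponents $s_A,s_B$ must be chosen to satisfy the strict thresholds before being sent to $\dim A,\dim B$. I do not anticipate a substantive obstacle beyond this case-splitting.
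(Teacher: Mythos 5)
Your argument is correct and, for the main cases, is essentially the paper's own proof specialized to $\theta=\theta_n$: the paper also takes Frostman measures $\mu,\nu$, uses $\widehat{S_{g\#}(\mu\times\nu)}(\xi)=\widehat{\mu}(\xi)\widehat{\nu}(-g^{-1}(\xi))$, integrates in $g$ to produce the spherical average $\sigma(\nu)(|\xi|)$, and feeds in \eqref{dz} (via Lemma \ref{Lemma1}, which is stated for general measures $\theta$ satisfying \eqref{eq12} only because the same computation must also yield the exceptional-set bounds of Theorem \ref{thm4}; for Theorem \ref{thm3} alone your restriction to $\theta_n$, $\beta=n-1$ loses nothing). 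The one genuine divergence is the case $\dim A>(n+1)/2$ of part (1): the paper disposes of it by citing Lemma 13.9 of \cite{M4} (equivalently Lemma 7.1 of \cite{M5}), whereas you derive it from the middle and third bounds of \eqref{dz} by splitting on whether the Frostman exponent of $\nu$ is at least $(n-1)/2$. Your route works — when $s_B\geq(n-1)/2$ you land on $I_{(n+1)/2+\epsilon}(\mu)<\infty$, and when $s_B<(n-1)/2$ on $I_{n-s_B+\epsilon}(\mu)<\infty$, which only needs $\dim A+\dim B>n$ — and it has the virtue of running the whole theorem through a single mechanism; the trade-off is that it makes this classical case rest on the Wolff/Du--Zhang estimates, while the cited lemma needs only the elementary decay $|\widehat{\sigma^{n-1}}(\xi)|\lesssim|\xi|^{(1-n)/2}$. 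The bookkeeping points you flag (low frequencies, strict choice of $s_A,s_B$ before passing to the supremum, and a countable sequence $u_k\nearrow\dim A+(n-1)\dim B/n$ in part (2)) are indeed the only remaining routine details.
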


We have the following exceptional set estimates:

\begin{thm}\label{thm4}
Let $A, B\subset\R^{n}$  be Borel sets.
\begin{itemize}
\item[(1)]Suppose $\dim A+\dim B>n$. Then there is $E\subset O(n)$ such that $\mathcal{L}^2(S_g(A\times B))>0$ for $g\in O(n)\setminus E$ and\\
$\dim E \leq 2n-1-\dim A - (n-1)\dim B/n+(n-1)(n-2)/2$.\\
Moreover,\\
$\dim E \leq 2n-1-\dim A - \dim B+(n-1)(n-2)/2$, if $\dim B\leq (n-1)/2,$\\
\item[(2)] Let $0<\a\leq n$. Then there is $E\subset O(n)$ such that $\dim S_g(A\times B)\geq \a$ for $g\in O(n)\setminus E$ and\\
$\dim E \leq \alpha+n-1-\dim A - (n-1)\dim B/n+(n-1)(n-2)/2$. \\
Moreover,\\
$\dim E \leq \alpha+n-1-\dim A - \dim B+(n-1)(n-2)/2$, if $\dim B \leq (n-1)/2$.
\end{itemize}
\end{thm}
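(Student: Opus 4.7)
The plan is to adapt the Fourier-analytic argument behind Theorem \ref{thm3} by replacing the Haar measure $\theta_n$ on $O(n)$ with a Frostman measure $\theta\in\mathcal M(O(n))$ and then invoking Proposition \ref{prop}. Fix $\beta\in(0,n-1]$ and assume $\theta$ satisfies \eqref{eq11}; by Lemma \ref{lemma2} this is guaranteed as soon as $\theta$ is $\alpha$-Frostman on $O(n)$ with $\alpha=\beta+(n-1)(n-2)/2$. If I can prove $\mathcal L^n(S_g(A\times B))>0$ (resp.\ $\dim S_g(A\times B)\geq\alpha$) for $\theta$-almost every $g$ whenever $\beta$ exceeds a threshold depending on $\dim A$, $\dim B$ (and $\alpha$), Proposition \ref{prop} will directly yield the claimed exceptional set estimate.

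To exploit the product structure, I take Frostman measures $\mu_A\in\mathcal M(A)$ and $\mu_B\in\mathcal M(B)$ with exponents $s_A<\dim A$ and $s_B<\dim B$, and set $\mu=\mu_A\times\mu_B$. Then
$$\widehat{\mu_g}(\xi)=\widehat{\mu}(\xi,-g^{-1}(\xi))=\widehat{\mu_A}(\xi)\,\widehat{\mu_B}(-g^{-1}(\xi)),$$
so by Fubini
$$\int|\widehat{\mu_g}(\xi)|^2\,d\theta g \;=\; |\widehat{\mu_A}(\xi)|^2\,\sigma_\theta(\mu_B)(\xi).$$
This is the step where the genuine analytic content enters: Lemma \ref{Lemma1}, itself resting on the Wolff/Du--Zhang spherical averaging bound \eqref{dz}, controls $\sigma_\theta(\mu_B)(\xi)$ by $|\xi|^{-(n-1)s_B/n+n-1-\beta+\epsilon}$ in general, and by the sharper $|\xi|^{-s_B+n-1-\beta+\epsilon}$ when $s_B\leq(n-1)/2$.

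For part (1), integrating in $\xi$ and invoking \eqref{eq10} bounds $\iint|\widehat{\mu_g}|^2\,d\xi\,d\theta g$ by an $s_A$-energy-like integral that is finite as soon as $s_A+(n-1)s_B/n>2n-1-\beta$. Letting $s_A\to\dim A$, $s_B\to\dim B$ and then $\beta$ toward $2n-1-\dim A-(n-1)\dim B/n$ gives $\widehat{\mu_g}\in L^2$, hence $\mathcal L^n(S_g(A\times B))>0$, for $\theta$-a.e.\ $g$; Proposition \ref{prop} then delivers the first bound on $\dim E$, and the variant under $\dim B\leq(n-1)/2$ follows by using the sharper case of Lemma \ref{Lemma1}. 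For part (2), the same computation with the extra weight $|\xi|^{\alpha-n}$ bounds $\int I_\alpha(\mu_g)\,d\theta g$ finitely whenever $s_A+(n-1)s_B/n>\alpha+n-1-\beta$, which via \eqref{eq3} produces $\dim S_g(A\times B)\geq\alpha$ for $\theta$-a.e.\ $g$, and after another application of Proposition \ref{prop} the asserted dimension bounds on $E$.

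Once Lemma \ref{Lemma1} is in hand, the whole argument is essentially exponent bookkeeping. The main obstacle is conceptual rather than technical: it is the input decay estimate for $\sigma_\theta(\mu_B)$, without which the exponents would be strictly weaker. Two minor points to watch are the constraint $\beta\leq n-1$ together with $\alpha>(n-1)(n-2)/2$ demanded by Lemma \ref{lemma2} (which restricts the range in which the resulting estimates are nontrivially smaller than $\dim O(n)=n(n-1)/2$), and keeping the $\epsilon$'s in Lemma \ref{Lemma1} harmless by shrinking $s_A,s_B$ to their limits only after $\beta$ has been fixed.
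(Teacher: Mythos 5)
Your proposal is correct and follows essentially the same route as the paper: a Frostman measure $\theta$ on $O(n)$ satisfying the decay bound of Lemma \ref{lemma2}, the factorization $\widehat{\lambda_g}(\xi)=\widehat{\mu}(\xi)\widehat{\nu}(-g^{-1}(\xi))$, the decay of $\sigma_\theta(\nu)$ from Lemma \ref{Lemma1}, energy comparison via \eqref{eq10}, and finally Proposition \ref{prop} to convert the $\theta$-almost-everywhere statement into an exceptional set bound. The exponent bookkeeping ($\beta\geq\alpha+n-1-(n-1)t/n-s$, and the sharper variant for $\dim B\leq(n-1)/2$) matches the paper's exactly.
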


Notice that in some cases the upper bound for $\dim E$ is bigger than $n-1+(n-1)(n-2)/2=n(n-1)/2=\dim O(n)$. Then we can take $E=O(n)$ and the statement is empty.

\begin{proof}[Proofs of Theorems \ref{thm3} and \ref{thm4}]
The case  $\dim A > (n+1)/2$ in the first part of (1) of Theorem \ref{thm3} follows from Lemma 13.9 in \cite{M4} or from Lemma 7.1 in \cite{M5}.  I don't know any exceptional set estimates under the condition $\dim A > (n+1)/2$. 

Let $0<s<\dim A$ and $0<t<\dim B$ and let $\mu\in\mathcal M(A),\nu\in\mathcal M(B)$ with $\mu(B(x,r))\leq r^{s'}, \nu(B(x,r))\leq r^{t'}$ for some $s'>s, t'>t$ and  for $x\in\R^n, r>0$. Let $\lambda_g = S_{g\#}(\mu\times\nu)\in\mathcal M(S_g(A\times B)).$ Then $\widehat{\lambda_g}(\xi)=\widehat{\mu}(\xi)\widehat{\nu}(-g^{-1}(\xi))$. For $0<\alpha\leq n$ we have by Lemma \ref{Lemma1}
\begin{equation}\label{eq4}
\begin{split}
&\iint|\widehat{\lambda_g}(\xi)|^2|\xi|^{\alpha-n}\,d\xi\,d\theta g\\
 &=\int\sigma_{\theta}(\nu)(-\xi)|\widehat{\mu}(\xi)|^2|\xi|^{\alpha-n}\,d\xi\\
&\lesssim\int|\widehat{\mu}(\xi)|^2|\xi|^{\alpha-1-(n-1)t/n-\beta}\,d\xi\\
&=cI_{\alpha+n-1-(n-1)t/n-\beta}(\mu)\lesssim I_s(\mu)<\infty,
\end{split}
\end{equation}
if $\beta \geq \alpha + n-1-(n-1)t/n-s$.  

Similarly, if $t\leq (n-1)/2$,

\begin{equation}\label{eq7}
\iint|\widehat{\lambda_g}(\xi)|^2|\xi|^{\alpha-n}\,d\xi\,d\theta g
\lesssim I_{\alpha+n-1-t-\beta}(\nu)\lesssim I_s(\mu)<\infty,
\end{equation}
if $\beta \geq \alpha + n-1-s-t$.  

To get (1) of Theorems \ref{thm3} and \ref{thm4} we take $\alpha=n$. If $\beta \geq 2n-1-(n-1)t/n-s$, we have $S_{g\#}(\mu\times\nu)<<\mathcal L^n$, and so $\mathcal L^n(S_g(A\times B))>0$, for $\theta$ almost all $g\in O(n)$. In the case $\dim A + (n-1)\dim B/n > n$ we can choose $s$ and $t$ so that $n-1 \geq 2n-1-(n-1)t/n-s$. Then we can take $\theta = \theta_{n}$ and $\beta=n-1$ to get (1) of Theorem \ref{thm3} in this case.  For Theorem \ref{thm4}(1)  we have $\mathcal L^n(S_g(A\times B))>0$ for $\theta$ almost all $g\in O(n)$ provided  $\beta \geq 2n-1-(n-1)t/n-s$. Using Proposition \ref{prop} we see from this that the set of $g\in  O(n)$ for which $\mathcal L^n(S_g(A\times B))=0$ has dimension at most $2n-1-\dim A - (n-1)\dim B/n+(n-1)(n-2)/2$. The case $\dim B \leq (n-1)/2$ follows in the same way using \eqref{eq7}.

For any $0<\alpha\leq n$ we have that if $\beta \geq \alpha + n-1-(n-1)t/n-s$, then by \eqref{eq4} $I_{\alpha}(S_{g\#}(\mu\times\nu))<\infty$, and so $\dim S_g(A\times B) \geq \alpha$, for $\theta$ almost all $g\in O(n)$. To get the first statement of (2) of Theorem \ref{thm3} we take $\alpha = (n-1)t/n+s$ and $\beta = n-1$.  The case $\dim B \leq (n-1)/2$ of Theorem \ref{thm3}(2) follows in the same way. For Theorem \ref{thm4}(2)  we use Proposition \ref{prop} as before. 

\end{proof}

\section{Further discussion and open problems}

\subsection{Averages over a cone}\label{cone}
When $\theta=\theta_n$ we have for the integral in \eqref{obeq} 
\begin{equation*}
\iint_{R\leq|\xi|\leq 2R}|\widehat{\mu}(\xi,g(\xi))|^2\,d\xi\,d\theta_{n} g = R^n\int_{1\leq |x|=|y|\leq 2}|\widehat{\mu}(Rx,Ry)|^2\,d\gamma(x,y),
\end{equation*}
where the integration on the right side is with respect to a suitably normalized surface measure $\gamma$ on the conical surface $\Gamma = \{(x,y)\in\Rn\times \Rn: 1\leq |x|=|y|\leq 2\}$. Let  $\phi$ be a smooth non-negative function with compact support in $\{y:1/2<|y|<3\}$ and with $\phi(y) = 1$ when  $1\leq |y| \leq 2$. 
Define the measure $\lambda$ by
$$\int f\,d\lambda = \iint_{|x|=|y|}f(x,y)\,d\sigma_{|y|}x\phi(y)\,dy.$$
Then $\gamma\lesssim\lambda$.

The Fourier transform of $\lambda$ has the estimate

\begin{equation}\label{gamma}
|\widehat{\lambda}(\xi)| \lesssim |\xi|^{1-n},
\end{equation}

because

\begin{align*}
|\widehat{\lambda}(u,v)| &= \int e^{-2\pi i(u\cdot x + v\cdot y)}\,d\lambda(x,y)\\ 
& =c\int e^{-2\pi i v\cdot y}\widehat{\sigma_{|y|}}(u)\phi(y)\,dy\\ 
&= c\int e^{-2\pi i v\cdot y}|y|^{n-1}|u|^{1-n}\widehat{\sigma_{|u|}}(y)\phi(y)\,dy\\ 
&= c|u|^{1-n}\mathcal F(|y|^{n-1}\phi(y)\widehat{\sigma_{|u|}}(y))(v)\\ 
&= c|u|^{1-n}\int\mathcal F(|y|^{n-1}\phi(y))(v-x)\,d\sigma_{|u|}x\approx |(u,v)|^{1-n},\\ 
\end{align*}
where the last estimate follows as for the Fourier transform of $\psi_{r}$ above. Let $\mu\in\mathcal M(\R^{2n})$ with $I_s(\mu)<\infty$. 
Then using a general theorem of Erdo\u gan, Theorem 1 in \cite{E}, we obtain for $R>1$,

\begin{equation}\label{eq6}
\begin{split}
&\iint_{1\leq |x|=|y|\leq 2}|\widehat{\mu}(Rx,Ry)|^2\,d\gamma(x,y)\\
&\lesssim\iint|\widehat{\mu}(Rx,Ry)|^2\,d\lambda(x,y)\lesssim
\begin{cases}
&R^{1-s}\ \text{for all}\ 0<s<2n,\\
&R^{-s}\ \text{if}\ 0<s\leq n-1.
\end{cases} 
\end{split}
\end{equation}

We shall not use these estimates in this paper. For Theorem \ref{thm2} they give another proof for the almost all statements  with respect to $\theta_n$, but they do not improve Theorem \ref{thm2} and they don't give the exceptional set estimates. In fact, the second estimate in \eqref{eq6} is the same as \eqref{obeq} with $\theta = \theta_{n}, \beta = n-1.$ Better decay estimates for \eqref{eq6} might lead to improvements for Theorem \ref{thm2}. In particular, any improvement of the exponent $1-s$ in the range $n<s<n+1$ would lead to an improvement of the first statement of Theorem \ref{thm2}(1). I am not aware of such results. However, in addition to the spherical averages (discussed in Section \ref{product sets}) which have been studied for a long time, there are recent estimates for cones and hyperboloids, see \cite{CHL}, \cite{H} and \cite{BEH}. 

The estimates \eqref{eq6} can be improved for product measures. We just plug in the spherical estimates from \eqref{dz}. Let $\mu,\nu\in\mathcal M(\Rn)$ be such that for some $0<s\leq n$ and $0<t\leq n$ we have $\mu(B(x,r)) \leq r^s$ and  $\nu(B(x,r)) \leq r^t$ for $x\in\Rn, r>0$.  Then for all $\epsilon > 0, R>1$, 
\begin{equation*}
\iint_{1\leq |x|=|y|\leq 2}|\widehat{\mu\times\nu}(Rx,Ry)|^2\,d\gamma(x,y)\lesssim 
\begin{cases} R^{-s - (n-1)t/n + \epsilon}\ \text{if}\ 0<s<n,\\
 R^{-s -t + \epsilon}\ \text{if}\ 0<t\leq( n-1)/2.\\
\end{cases}
\end{equation*}
To see this note that $\widehat{\mu\times\nu}(x,y) = \widehat{\mu}(x)\widehat{\nu}(y)$. Then if $\sigma(\nu)(r)\lesssim r^{-\a}$, we get by \eqref{eq8}
\begin{align*}
&\iint_{1\leq |x|=|y|\leq 2}|\widehat{\mu\times\nu}(Rx,Ry)|^2\,d\gamma(x,y)\\
&= c\int_{1\leq |x|\leq 2}\sigma(\nu)(R|x|)|\widehat{\mu}(Rx)|^2\,dx\\
&\lesssim  R^{-\a}\int_{1\leq |x|\leq 2}|\widehat{\mu}(Rx)|^2\,dx
\lesssim R^{-\a - s},
\end{align*}
and the claims follow from \eqref{dz}.

\subsection{Distance sets and measures}
There are some connections of this topic to Falconer's distance set problem. For general discussion and references, see for example \cite{M5}. Falconer showed in \cite{F2} that for a Borel set $A\subset\Rn$ the distance set $\{|x-y|: x,y\in A\}$ has positive Lebesgue measure if $\dim A > (n+1)/2$. We had the same condition in Theorem \ref{thm3}  and it appeared in the intersection results of \cite{M2}. When $n=2$ Wolff \cite{W} improved $3/2$ to $4/3$.  Observe that when $\dim A = \dim B$, the assumption $\dim A + \dim B/2 > 2$ in Theorem \ref{thm3} becomes $\dim A > 4/3$ and is the same as Wolff's. In \cite{IL} Iosevich and Liu improved distance set results of the time for product sets with rather simple arguments. For the most recent,  and so far the best known, distance set results, see \cite{GIOW}, \cite{DGOWWZ} and \cite{DIOWZ}. 

The proofs of distance set results often involve the distance measure $\delta(\mu)$ of a measure $\mu$ defined by

$$\delta(\mu)(B) = \mu\times\mu(\{(x,y): |x-y|\in B\}),\ B\subset\R.$$

For example, Wolff showed that $\delta(\mu)\in L^2(\R)$, if $I_{s}(\mu)<\infty$ for some $s>4/3$. To do this he used decay estimates for the spherical averages $\sigma(\mu)(r)$ and proved \eqref{dz} for $n=2$. 

From the argument in subsection \ref{alter} we see that when $\mu$ is replaced by $\mu\times\nu$ we have

\begin{align*}
&\iint \underline{D}(S_{g\#}(\mu\times\nu))(z)^2\,dz\,d\theta_ng\\
&\leq\liminf_{r\to 0}\iint \a(n)^{-1}r^{-n}S_{g\#}(\mu\times\nu)(B(z,r))\,dS_{g\#}(\mu\times\nu) z\,d\theta_ng\\
=&\liminf_{r\to 0}\int \a(n)^{-1}r^{-n}\theta_n(\{g:|x-g(y)-(u-g(v))|\leq r\})\\
&d(\mu\times\nu) (x,y)\,d(\mu\times\nu) (u,v)\\
\leq&\liminf_{r\to 0}c\int r^{-1}\mu\times\mu(\{(x,u):||x-u|-|y-v||\leq r\})|y-v|^{1-n}\\
&d(\nu\times\nu) (y,v)\\
&= \liminf_{r\to 0}c\int r^{-1}\delta(\mu)(B(t,r))t^{1-n}\,d\delta(\nu)t\\
&=c\int \delta(\mu)(t)\delta(\nu)(t)t^{1-n}\,dt,
\end{align*}
provided the distance measures $\delta(\mu)$ and $\delta(\nu)$ are $L^2$ functions, and even a bit better so that we can move $\liminf$ inside the integral. In fact, we have equality everywhere in the above argument if $\mu$ and $\nu$ are smooth functions with compact support. Since by an example in \cite{GIOW}, when $n=2$, for any $s<4/3$,\  $I_{s}(\mu)<\infty$ is not enough for $\delta(\mu)$ to be in $L^2$, probably it is not enough for  $S_{g\#}(\mu\times\mu)$ to be in $L^2$. But  in \cite{GIOW} it was shown that if $I_{s}(\mu)<\infty$ for some $s>5/4$, there is a modification of $\mu$ with good $L^2$ behaviour.  Maybe this method could be used to show, for instance, that if $n=2$ and $\dim A = \dim B > 5/4$, then $\mathcal L^2(\mathcal S_g(A\times B))>0$ for almost all $g\in O(2)$.  One problem is that for distance sets one can split the measure to two parts with positive distance and only consider distances between points in the different supports, so one need not consider arbitrarily small distances, and the authors of \cite{GIOW} seem to use this essentially. Here such reduction may not be possible.

\subsection{Hausdorff dimension of intersections}
One motivation for this study is hope to shed light on intersection problems.  The main question is: what conditions on $\dim A$ and $\dim B$ guarantee that for almost all $g\in O(n)$, 
$\dim A\cap(g(B)+z) \geq \dim A + \dim B - n-\epsilon$ for positively many $z\in\R^n$ for every $\epsilon>0$. I expect that $\dim A +\dim B > n$ should be enough.  This is only known when one of the sets has dimension bigger than $(n+1)/2$. A necessary condition of course is that $\mathcal{L}^n(S_g(A\times B))>0$ for almost all $g\in O(n)$. By Theorem \ref{thm3} we have this when $\dim A+(n-1)\dim B/n>n$, but even then for general $A$ and $B$ I only know 
the estimate $\dim A\cap(g(B)+z) \geq \dim A + (n-1)\dim B/n - n-\epsilon$, which follows from Theorem 4.1 in \cite{M7} and \eqref{dz}. Since $A\cap(g(B)+z)$ is the projection on the first factor of $(A\times B)\cap S_g^{-1}(z)$, the problem is equivalent to getting dimension estimates for the sections $(A\times B)\cap S_g^{-1}(z)$. This point of view together with the results of Section \ref{product sets} is used in \cite{M8} to show that for almost all $g\in O(n)$, 
$\dim A\cap(g(B)+z) \geq \dim A + \dim B - n$ for positively many $z\in\R^n$ if $A$ and $B$ have positive and finite Hausdorff measures in their dimensions and $\liminf_{r\to 0}r^{-s}\mathcal H^s(A\cap B(x,r))>0$ for $\mathcal H^s$ almost all $x\in A$, where $s=\dim A$, and respectively for $B$.

Replacing $O(n)$ with the bigger group of similarities, maps $rg, r>0, g\in O(n),$ better results were obtained in \cite{K}, \cite{M1} and \cite{M7}. Then no condition like $\dim A>(n+1)/2$ is needed. In fact, Kahane considered the more general situation of closed subgroups of the general linear group which act transitively outside the origin.

\vspace{1cm}
\begin{footnotesize}
{\sc Department of Mathematics and Statistics,
P.O. Box 68,  FI-00014 University of Helsinki, Finland,}\\
\emph{E-mail address:} 
\verb"pertti.mattila@helsinki.fi" 

\end{footnotesize}


\begin{thebibliography}{CMM}

\bibitem[BEH]{BEH} A. Barron, M.B. Erdo\u gan and T.L.J. Harris. Fourier decay rate of fractal measures on hyperboloids,  arXiv:2004.06553.

\bibitem[CHL]{CHL} C.-H. Cho, S. Ham and S. Lee. Fractal Strichartz  estimate for the wave equation,  {\em Nonlinear Anal.} {\bf 150} (2017), 61--75.

\bibitem[D]{D} X. Du. Upper bounds of Fourier decay rate of fractal measures,  arXiv:1908.05753.

\bibitem[DGOWWZ]{DGOWWZ} X. Du, L. Guth,  Y. Ou, H. Wang, B. Wilson, and R. Zhang. Weighted restriction
estimates and application to Falconer distance set problem, to appear in American Journal of
Math., arXiv:1802.10186.

\bibitem[DIOWZ]{DIOWZ} X. Du, I. Iosevich,  Y. Ou, H. Wang and R. Zhang. An improved result for Falconer's distance set problem in even dimensions, arXiv:2006.06833.

\bibitem[DZ]{DZ} X. Du and R. Zhang. Sharp $L^2$ estimates of the Schr\"odinger maximal function in higher
dimensions,  {\em Annals of Math.} {\bf 189} (2019), 837--861.

\bibitem[E]{E} M. B. Erdo\u gan. A note on the Fourier transform of fractal measures,  
{\em Int. Math. Res. Not.}  {\bf 11} (2004), 299--313.

\bibitem[EIT]{EIT}
S. Eswarathasan, A. Iosevich and K. Taylor. 
  Intersections of sets and Fourier analysis,  {\em J. Anal. Math.} {\bf 128} (2016), 159--178.

\bibitem[F1]{F1}
K.J. Falconer. Hausdorff dimension and the exceptional set of projections, 
{\em Mathematika} {\bf 29} (1982), 109--115.

\bibitem[F2]{F2}
K.J. Falconer. On the Hausdorff dimension of distance sets, 
{\em Mathematika} {\bf 29} (1982), 206--212.

\bibitem[FO]{FO} K. F\"assler and T. Orponen. On restricted families of projections in $\R^3$, {\em Proc. London Math. Soc. (3)} {\bf 109} (2014), 353--381. 

\bibitem[GIOW]{GIOW}  L. Guth, A. Iosevich, Y. Ou, and H. Wang. On Falconer’s distance set problem in the
plane, {\em Invent. Math.} {\bf 219} (2020), 779--830.

\bibitem[H]{H} T.L.J. Harris. Improved decay of conical averages of the Fourier transform,  {\em Proc. Amer. Math. Soc.} {\bf 147} (2019), no. 11, 4781–4796.

\bibitem[IL]{IL}
 A. Iosevich and B. Liu. 
  Falconer distance problem, additive energy and Cartesian products,  {\em Ann. Sci. Fenn. Math.} {\bf 41} (2016), 579--585.

\bibitem[JJK]{JJK} E. J\"arvenp\"a\"a, M. J\"arvenp\"a\"a and T. Keleti. Hausdorff dimension and non-degenerate families of projections,  {\em J. Geom. Anal.}  {\bf 24} (2014), 2020--2034. 

\bibitem[JJLL]{JJLL} E. J\"arvenp\"a\"a, M. J\"arvenp\"a\"a, F. Ledrappier and M. Leikas. One-dimensional families of projections,  {\em Nonlinearity} {\bf 21(3)} (2008), 453--463. 

\bibitem[J1]{J1}  M. J\"arvenp\"a\"a. On the upper Minkowski dimension, the packing dimension, and orthogonal projections,  {\em Ann. Acad. Sci. Fenn. Ser. A I Math. Dissertationes}  {\bf 99} (1994), 1--34.

\bibitem[J2]{J2}  M. J\"arvenp\"a\"a. Concerning the packing dimension of intersection measures,  {\em Math. Proc. Cambridge Philos. Soc.}  {\bf 121} (1997), 287--296.

\bibitem[J3]{J3}  M. J\"arvenp\"a\"a. Packing dimension, intersection measures, and isometries, {\em Math. Proc. Cambridge Philos. Soc.}  {\bf 122} (1997), 483--490.


\bibitem[KOV]{KOV}  A. K\"aenm\"aki, T. Orponen and L. Venieri. A Marstrand type restricted projection theorem in $\R^3$,   arXiv:1708.04859.

\bibitem[K]{K}
J.--P. Kahane. 
Sur la dimension des intersections, In Aspects of Mathematics and Applications, North-Holland Math. Library, 34, (1986), 419--430.

\bibitem[Ka]{Ka}
R. Kaufman.
On Hausdorff dimension of projections, {\em Mathematika} {\bf 15} (1968), 153-155.
 
\bibitem[KM]{KM}
R.~Kaufman and P. Mattila. Hausdorff dimension and exceptional sets of linear transformations,
 {\em  Ann. Acad. Sci. Fenn. A Math.} {\bf 1} (1975), 387--392.
 
 
 
\bibitem[M]{M}
J.~M. Marstrand. Some fundamental geometrical properties of plane sets of fractional dimensions, {\em Proc. London Math. Soc.(3)} {\bf 4} (1954), 257--302.

 
\bibitem[M1]{M1} P. Mattila. Hausdorff dimension and capacities of intersections of sets in n-space, {\em Acta Math.}  {\bf 152}, (1984), 77--105.

\bibitem[M2]{M2} P. Mattila. On the Hausdorff dimension and capacities of intersections, {\em Mathematika}  {\bf 32}, (1985), 213--217.

\bibitem[M3]{M3} P. Mattila. Spherical averages of Fourier transforms of measures with finite energy; dimension of intersections and distance sets, {\em Mathematika}  {\bf 34}, (1987), 207--228.
217.

\bibitem[M4]{M4} P. Mattila. {\em Geometry of Sets and Measures in Euclidean Spaces}, Cambridge University Press, Cambridge, 1995.

\bibitem[M5]{M5} P. Mattila. {\em Fourier Analysis and Hausdorff Dimension}, Cambridge University Press, Cambridge, 2015.

\bibitem[M6]{M6} P. Mattila. Hausdorff dimension, projections, intersections, and Besicovitch sets, in New Trends in Applied Harmonic Analysis, Volume 2, pp. 129--157, Birkh\"auser, 2019.

\bibitem[M7]{M7} P. Mattila. Exceptional set estimates for the Hausdorff dimension of intersections, {\em Ann. Acad. Sci. Fenn. A Math.} {\bf 42} (2017),  611--620.

\bibitem[M8]{M8} P. Mattila. Hausdorff dimension of intersections with planes and general sets, arXiv:2005.11790, to appear in J. Fractal Geometry.

\bibitem[O]{O} D. M. Oberlin. Exceptional sets of projections, unions of k-planes, and associated transforms,  {\em Israel J. Math.} {\bf 202} (2014),  331--342.

\bibitem[OO]{OO} D. M. Oberlin and R. Oberlin. Application of a Fourier restriction theorem to certain families of projections in $\R^2$,  {\em J. Geom. Anal.} {\bf 25} (2015),  1476--1471.

\bibitem[Or]{Or} T. Orponen. Hausdorff dimension estimates for some restricted families projections theorem in $\R^3$, {\em Adv. Math.} {\bf 275} (2015),  147--183.

\bibitem[OV]{OV} T. Orponen and L. Venieri. Improved bounds for restricted families of planes in $\R^3$, {\em IMRN} (2018).

\bibitem[W]{W} T. W. Wolff. Decay of circular means of Fourier transforms of measures,  {\em Int. Math. Res. Not.}  {\bf 10} (1999), 547--567.
	
\end{thebibliography}
\end{document}